\newtheorem{theorem}{Theorem}[section]
\newtheorem{definition}[theorem]{Definition}
\newtheorem{lemma}[theorem]{Lemma}
\newtheorem{example}[theorem]{Example}
\newtheorem{remark}[theorem]{Remark}
\newtheorem{corollary}[theorem]{Corollary}
\def\inter{{\rm int}}
\def\iso{{\rm iso}}
\def\acc{{\rm acc}}
\def\snoi{\smallskip\noindent}
\def\ind{{\rm ind}}
\date{}
\title{On Koliha-Drazin invertible operators and Browder type theorems}
\author{Milo\v s D. Cvetkovi\'c, Sne\v zana \v C. \v Zivkovi\'c-Zlatanovi\'c \footnote{Sne\v zana \v C. \v Zivkovi\'c-Zlatanovi\'c is supported by the Ministry of Education, Science and Technological Development, Republic of Serbia, grant no. 174007.}}
\begin{document}
	\maketitle
	
\begin{abstract}
\noindent Let $T$ be a bounded linear operator on a Banach space $X$. We give new necessary and sufficient conditions for $T$ to be Drazin or Koliha-Drazin invertible. All those conditions have the following form: $T$ possesses certain decomposition property and zero is not an interior point of some part of the spectrum of $T$. In addition, we study operators $T$ satisfying Browder\textquoteright s theorem, or a-Browder\textquoteright s theorem, by means of some relationships between diferent parts of the spectrum of $T$.
\end{abstract}

2010 {\it Mathematics subject classification\/}:  47A53, 47A10.

{\it Key words and phrases\/}: Koliha-Drazin invertibility; Browder type theorems; Isolated point; Interior point; Decomposition. 

\section{Introduction and preliminaries}

Throughout, $X$ is an infinite dimensional complex Banach space, $T$ is a bounded linear operator acting on $X$, and $L(X)$ is the algebra of all bounded linear operators defined on $X$. A subspace $M$ of $X$ is said to be $T$-invariant if $T(M) \subset M$. We define $T_M:M \to M$ as
$T_Mx=Tx, \, x \in M$. Clearly, $T_M$ is linear and bounded. If $M$ and $N$ are two closed $T$-invariant
subspaces of $X$ such that $X=M \oplus N$, we say that $T$ is
completely reduced by the pair $(M,N)$ and it is denoted by
$(M,N) \in Red(T)$. In this case we write $T=T_M \oplus T_N$ and we say
that $T$ is the direct sum of $T_M$ and $T_N$.

There are several well known characterizations of Koliha-Drazin invertible operators \cite{koliha, schmoeger}. For instance, $T \in L(X)$ is Koliha-Drazin invertible if and only if $T$ admits the following decomposition: $T=T_1 \oplus T_2$, $T_1$ is invertible and $T_2$ is quasinilpotent \cite[Theorem 7.1]{koliha}. In this note we give new characterizations that involve some decomposition properties of $T$ as well (see Theorem \ref{Tequiv}). In particular, we prove that $T \in L(X)$ is Koliha-Drazin invertible if and only if $0$ is not an interior point of its descent spectrum and $T=T_1 \oplus T_2$ where $T_1$ is upper semi-Weyl and $T_2$ is quasinilpotent. Using a similar approach we also consider Drazin invertible operators (see Theorem \ref{Tequiv1}).

We will denote by $\sigma(T)$ the spectrum of $T$. Let R be a subset of $L(X)$ and let $\sigma_R(T)=\{\lambda \in \mathbb{C}: T-\lambda I \not \in R\}$ be the spectrum generated by R. We are also interested in the following question: under what conditions is it true that $0 \in \iso \, \sigma_R(T) \Longleftrightarrow 0 \in \iso \, \sigma(T)$, where $\iso \, \sigma_R(T)$ and $\iso \, \sigma(T)$ are the sets of isolated points of $\sigma_R(T)$ and $\sigma(T)$, respectively? We consider the case where R is the set of lower semi-Weyl operators (see Corollary \ref{cor-iso}). Recently, P. Aiena and S. Triolo studied the case where R is the set of surjective operators \cite[Theorem 2.4]{Salvatore}. 

What is more, in Section 3 we study Browder type theorems. In particular, we extend \cite[Theorem 2.3]{aienaTstar} in a sense that we give conditions under which the assertions (iii) and (iv) of \cite[Theorem 2.3]{aienaTstar} can be reversed (see (ii) $\Longleftrightarrow$ (iv) of Theorems \ref{cor-aBrowder} and \ref{converseprime}). 

In our work we will frequently use Theorem \ref{Theorem_1} which enable us to study both Kolih-Drazin invertible operators and Browder-type theorems. In addition, we give several examples that serve to illustrate our results.

In what follows we will recall some necessary facts and give the auxiliary results. Let $N(T)$ and $R(T)$ be the null space and range of $T$, respectively. Denote by $\alpha(T)$ and $\beta(T)$, the dimension of $N(T)$ and the codimension of $R(T)$, respectively.
\begin{definition}
	{\em Let $T \in L(X)$. Then:
	
	\smallskip
	
	\noindent {\rm (i)} $T$ is Kato if $R(T)$ is closed and $N(T) \subset R(T^n)$ for all $n \in \mathbb{N}_0$;
	
	\smallskip
	
	\noindent {\rm (ii)} $T$ is of Kato type if there exists a pair $(M,N) \in Red(T)$ such that $T_M$ is Kato and $T_N$ is nilpotent;
	
	\smallskip
	
	\noindent {\rm (iii)} $T$ admits a generalized Kato decomposition (GKD for short) if there exists a pair $(M,N) \in Red(T)$ such that $T_M$ is Kato and $T_N$ is quasinilpotent.}
\end{definition} 

\noindent If $T$ is Kato, then $T^n$ is Kato for all $n \in \mathbb{N}$ \cite[Theorem 12.7]{Muller}, and hence $R(T^n)$ is closed. We define $k(T)$, the lower bound of $T$, to be $k(T)=\inf \{\|Tx\|:x \in X \, \text{with} \, \|x\|=1\}$. We say that $T$ is bounded below if $k(T)>0$. It may be shown that $T \in L(X)$ is bounded below if and only if $T$ is injective and $R(T)$ is closed. The approximate point spectrum of $T \in L(X)$, denoted by $\sigma_{ap}(T)$, is the set of all $\lambda \in \mathbb{C}$ such that the operator $T-\lambda I$ is not bounded below. The surjective spectrum $\sigma_{su}(T)$ is defined as the set of all $\lambda \in \mathbb{C}$ such that $R(T-\lambda I) \neq X$. It is well known that $\sigma_{ap}(T)$ and $\sigma_{su}(T)$ are compact subsets of $\mathbb{C}$ that contain the boundary of $\sigma(T)$. Evidently, bounded below operators and surjective operators are examples of Kato operators.

We say that $T \in L(X)$ is semi-Fredholm if $R(T)$ is closed and either $\alpha(T)<\infty$ or $\beta(T)<\infty$. In such a case we define the index of $T$ as $\ind(T)=\alpha(T)-\beta(T)$. It is well known that every semi-Fredholm operator is of Kato type with $N$ finite dimensional \cite[Theorem 16.21]{Muller}. An operator $T \in L(X)$ is said to be upper semi-Fredholm (resp. lower semi-Fredholm, Fredholm) if $R(T)$ is closed and $\alpha(T)<\infty$ (resp. $\beta(T)<\infty$, $\alpha(T)<\infty$ and $\beta(T)<\infty$).

Other important classes of operators in Fredholm theory are the classes of upper and lower semi-Weyl operators. These classes are defined as follows: 
$T \in L(X)$ is said to be upper semi-Weyl (resp. lower semi-Weyl) if it is upper semi-Fredholm and $\ind(T) \leq 0$ (resp. lower semi-Fredholm and $\ind(T) \geq 0$).  The upper semi-Weyl spectrum is the set
$$\sigma_{uw}(T)=\{\lambda \in \mathbb{C}: T-\lambda I \; \text{is not upper semi-Weyl}\},$$
while the lower semi-Weyl spectrum is defined by
$$\sigma_{lw}(T)=\{\lambda \in \mathbb{C}: T-\lambda I \; \text{is not lower semi-Weyl}\}.$$
It is said that $T \in L(X)$ is Weyl if it is Fredholm of index $0$. The Weyl spectrum is defined by
$$\sigma_{w}(T)=\{\lambda \in \mathbb{C}: T-\lambda I \; \text{is not Weyl}\}.$$

Recall that the ascent of an operator $T \in L(X)$ is defined as the smallest nonnegative integer $p=p(T)$ such that $N(T^p)=N(T^{p+1})$. If such an integer does not exist, then $p(T)=\infty$. Similarly, the descent of $T$ is defined as the smallest nonnegative integer $q=q(T)$ such that $R(T^{q})=R(T^{q+1})$, and if such an integer does not exist, we put  $q(T)=\infty$. The descent spectrum is the set 
$$\sigma_{desc}(T)=\{\lambda \in \mathbb{C}: q(T-\lambda I)=\infty\}.$$
The set $\sigma_{desc}(T)$ is compact but possibly empty (see Corollary 1.3 and Theorem 1.5 in \cite{descent}). For example, if $T$ is the zero operator on $X$, then $\sigma_{desc}(T)=\emptyset$.

An operator $T \in L(X)$ is upper semi-Browder if $T$ is upper semi-Fredholm and $p(T)<\infty$, while an operator $T \in L(X)$ is Browder if it is Fredholm of finite ascent and descent. We consider the following spectra generated by these classes:
\begin{center}
	$\sigma_{ub}(T)=\{\lambda \in \mathbb{C}: T-\lambda I \; \text{is not upper semi-Browder}\}$,
	
	\smallskip
	
	$\sigma_{b}(T)=\{\lambda \in \mathbb{C}: T-\lambda I \; \text{is not Browder}\}$.
\end{center}
The sets $\sigma_{ub}(T)$ and $\sigma_b(T)$ are the upper-Browder spectrum and the Browder spectrum of $T$, respectively.

Drazin invertibility was initially introduced in rings \cite{drazin}. In the case of Banach spaces, an operator $T \in L(X)$ is Drazin invertible if and only if $p(T)=q(T)<\infty$ \cite[Theorem 4]{king}, and this is exactly when $T=T_1 \oplus T_2$, where $T_1$ is invertible and $T_2$ is nilpotent \cite[Theorem 22.10]{Muller}. The concept of Drazin invertibility for bounded operators may be extended as follows. 

\begin{definition}
	{\em $T \in L(X)$ is said to be left Drazin invertible if $p=p(T)<\infty$ and $R(T^{p+1})$ is closed; $T \in L(X)$ is said to be right Drazin invertible if $q=q(T)<\infty$ and $R(T^{q})$ is closed.}
\end{definition}
\noindent The Drazin spectrum, the left Drazin spectrum and the right Drazin spectrum are defined respectively by
	$$\sigma_{D}(T)=\{\lambda \in \mathbb{C}: T-\lambda I \; \text{is not Drazin invertible}\},$$
	$$\sigma_{LD}(T)=\{\lambda \in \mathbb{C}: T-\lambda I \; \text{is not left Drazin invertible}\},$$
	$$\sigma_{RD}(T)=\{\lambda \in \mathbb{C}: T-\lambda I \; \text{is not right Drazin invertible}\}.$$

\noindent In particular, we note that the left Drazin spectrum is a closed subset of the complex plane \cite{axiomatic}. J. Koliha also extended the concept of Drazin invertibility \cite{koliha}. An operator $T \in L(X)$ is generalized Drazin invertible (or Koliha-Drazin invertible) if and only if zero is not an accumulation point of its spectrum, and this is equivalent to saying that $T=T_1 \oplus T_2$, where $T_1$ is invertible and $T_2$ is quasinilpotent \cite[Theorems 4.2 and 7.1]{koliha}.

From now on, if $F \subset \mathbb{C}$ then the set of isolated points of $F$, the set of accumulation points of $F$, the boundary of $F$ and the interior of $F$ will be denoted by $\iso \, F$, $\acc \, F$, $\partial F$ and $\inter \, F$, respectively.

\begin{lemma} \label{partialcluster}
	Let $F \subset \mathbb{C}$ be closed. Then
	$\lambda \in \iso \, F$ if and only if $\lambda \in \iso \, \partial F$.
\end{lemma}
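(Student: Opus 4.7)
The plan is to verify the two implications separately. The forward direction is essentially a direct unpacking of the definitions, while the reverse direction relies on a connectedness argument in the punctured disk.

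For the forward direction, suppose $\lambda \in \iso \, F$. Then $\lambda \in F$ and there is an open neighborhood $U$ of $\lambda$ with $U \cap F = \{\lambda\}$. In particular, no neighborhood of $\lambda$ is contained in $F$, so $\lambda \notin \inter \, F$. Since $F$ is closed, $\lambda \in F \setminus \inter \, F = \partial F$. Moreover $U \cap \partial F \subseteq U \cap F = \{\lambda\}$, which shows that $\lambda$ is isolated in $\partial F$.

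For the reverse direction, assume $\lambda \in \iso \, \partial F$, so that $\lambda \in \partial F \subseteq F$ and there is an open disk $D$ centered at $\lambda$ with $D \cap \partial F = \{\lambda\}$. Suppose, toward a contradiction, that $\lambda \notin \iso \, F$. Then every point of $F \cap D$ other than $\lambda$ is forced into $\inter \, F$, because the only boundary point of $F$ inside $D$ is $\lambda$ itself. This allows us to decompose the punctured disk as
$$D \setminus \{\lambda\} = (D \cap \inter \, F) \cup (D \setminus F),$$
a disjoint union of two open sets. Since $D \setminus \{\lambda\}$ is connected, one of the two pieces must be empty. If $D \cap \inter \, F = \emptyset$, then $D \cap F \subseteq \{\lambda\}$, so $\lambda \in \iso \, F$, contradicting our assumption. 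If $D \setminus F = \emptyset$, then $D \subseteq F$ and so $\lambda \in \inter \, F$, contradicting $\lambda \in \partial F$. Either way we reach a contradiction, so $\lambda \in \iso \, F$.

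The forward implication is routine, so the main obstacle is the reverse direction: one has to rule out the possibility that interior points of $F$ accumulate at $\lambda$ while boundary points do not. The key observation is that such an accumulation would force the punctured disk $D \setminus \{\lambda\}$ to split into two disjoint nonempty open sets, in contradiction with its connectedness.
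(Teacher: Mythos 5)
Your proof is correct. Note that the paper states Lemma \ref{partialcluster} without any proof, so there is nothing to compare against line by line; your argument is the standard one. The forward direction is indeed routine ($F$ closed gives $\partial F = F \setminus \inter\, F$, so an isolating neighbourhood for $F$ isolates $\lambda$ in $\partial F$ as well), and your reverse direction correctly identifies the essential point: writing $D \setminus \{\lambda\} = (D \cap \inter\, F) \cup (D \setminus F)$ as a disjoint union of open sets and invoking connectedness of the punctured disk. This is exactly where the hypothesis $F \subset \mathbb{C}$ (rather than $F \subset \mathbb{R}$, where a punctured interval is disconnected and the lemma fails, e.g.\ for $F=[0,1]$ and $\lambda=0$) is used, so your proof isolates the right topological ingredient.
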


\begin{lemma} \label{opsta}
	Let $E$ and $F$ be compact sets of the complex plane such that
	$\partial F \subset E \subset F$. If $\lambda_0 \in \partial F$,
	then the following statements are equivalent:\par
	
	\smallskip
	
	\noindent {\rm (i)} $\lambda_0 \in \iso \, E$;\par
	
	\smallskip
	
	\noindent {\rm (ii)} $\lambda_0 \in \iso \, F$.
\end{lemma}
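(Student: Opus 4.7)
The plan is to exploit the sandwich $\partial F \subset E \subset F$ together with Lemma \ref{partialcluster}, which reduces isolation in $F$ to isolation in $\partial F$. Both compact sets contain $\lambda_0$ (since $\lambda_0 \in \partial F \subset E \subset F$), and $F$ being compact is in particular closed, so Lemma \ref{partialcluster} applies to $F$.

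For the implication (ii) $\Longrightarrow$ (i), I would simply pick an open neighborhood $U$ of $\lambda_0$ with $U \cap F = \{\lambda_0\}$; since $E \subset F$ and $\lambda_0 \in E$, intersecting with $E$ immediately gives $U \cap E = \{\lambda_0\}$, so $\lambda_0 \in \iso\, E$. This direction uses only $E \subset F$ and $\lambda_0 \in E$, not the boundary hypothesis.

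The interesting direction is (i) $\Longrightarrow$ (ii). Assume $\lambda_0 \in \iso\, E$, and choose an open neighborhood $U$ of $\lambda_0$ with $U \cap E = \{\lambda_0\}$. The inclusion $\partial F \subset E$ then forces $U \cap \partial F \subset \{\lambda_0\}$, and since $\lambda_0 \in \partial F$ we in fact have $U \cap \partial F = \{\lambda_0\}$. Hence $\lambda_0 \in \iso\, \partial F$. Applying Lemma \ref{partialcluster} to the closed set $F$ then transfers this to $\lambda_0 \in \iso\, F$, as desired.

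There is no real obstacle here; the only subtle point is remembering that Lemma \ref{partialcluster} is what lets one upgrade isolation in the thin set $\partial F$ to isolation in the whole set $F$, which is exactly why the hypothesis $\partial F \subset E$ is the right one to impose (and why the statement restricts to $\lambda_0 \in \partial F$ rather than arbitrary $\lambda_0 \in F$).
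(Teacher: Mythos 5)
Your argument is correct and is essentially identical to the paper's own proof: the easy direction uses only $\lambda_0 \in \partial F \subset E \subset F$, and the other direction passes through $\lambda_0 \in \iso\,\partial F$ and then invokes Lemma \ref{partialcluster} exactly as the paper does. No issues.
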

\begin{proof}
	If $\lambda_0 \in \iso \, F$, then $\lambda_0 \in \partial F \subset
	E$. It follows that $\lambda_0 \in \iso \, E$ since $E \subset F$.
	To prove the opposite implication suppose that $\lambda_0 \in
	\iso \, E \cap \partial F$. Evidently, $\lambda_0 \in \iso \, \partial F$. Now, we obtain $\lambda_0 \in \iso \, F$ by Lemma \ref{partialcluster}.
\end{proof}

\smallskip

The quasinilpotent part of an operator $T \in L(X)$ is defined by
$$H_0(T)=\{x \in X: \lim_{n \to \infty} \|T^nx\|^{1/n}=0\}.$$
Clearly, $H_0(T)$ is a subspace of $X$ not necessarily closed. An operator $T \in L(X)$ has the single-valued extension property at $\lambda_0 \in \mathbb{C}$, SVEP at $\lambda_0$, if for every open disc $D_{\lambda_0}$ centered at $\lambda_0$ the only analytic function $f: D_{\lambda_0} \to X$ which satisfies
$$(T-\lambda I)f(\lambda)=0 \; \; \text{for all} \; \; \lambda \in D_{\lambda_0},$$
is the function $f \equiv 0$. The operator $T$ is said to have the SVEP if $T$ has the SVEP at every $\lambda \in \mathbb{C}$. The following implication follows immediately from the definition of the localized SVEP and from the identity theorem for analytical functions:
\begin{equation}\label{SVEPap}
\lambda_0 \not \in \inter \, \sigma_{ap}(T) \Longrightarrow T \; \text{has the SVEP at} \; \lambda_0.
\end{equation} 
Consequently,
\begin{equation}\label{SVEPsu}
\lambda_0 \not \in \inter \, \sigma_{su}(T) \Longrightarrow T^{\prime} \; \text{has the SVEP at} \; \lambda_0,
\end{equation}
since $\sigma_{ap}(T^{\prime})=\sigma_{su}(T)$, where $T^{\prime}$ is the adjoint operator of $T$. In particular, from \eqref{SVEPap} and \eqref{SVEPsu} we easily obtain that
\begin{equation*}
\lambda_0 \not \in \acc \, \sigma(T) \Longrightarrow \; \text{both} \; T \; \text{and} \; T^{\prime} \; \text{has the SVEP at} \; \lambda_0.
\end{equation*}
Under the assumption that $T$ admits a GKD, we have the following result (see \cite[Theorem 3.5]{kinezi} and \cite[Theorem 2.6]{aienaAmer}).
\begin{lemma}\label{lema1}
Suppose that $T \in L(X)$ admits a GKD. Then the following properties are equivalent:

\smallskip

\noindent {\rm (i)} $H_0(T)$ is closed;

\smallskip

\noindent {\rm (ii)} $T$ has the {\rm SVEP} at $0$;

\smallskip

\noindent {\rm (iii)} $0$ is not an interior point of $\sigma_{ap}(T)$.	
\end{lemma}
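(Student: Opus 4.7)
My plan is to exploit the GKD $T = T_M \oplus T_N$ (with $T_M$ Kato and $T_N$ quasinilpotent) to reduce all three conditions to a single statement about the Kato summand $T_M$. Because $T_N$ is quasinilpotent, $\sigma_{ap}(T_N) \subseteq \{0\}$, $H_0(T_N) = N$, and $T_N$ has SVEP at $0$. Direct-sum bookkeeping then gives: $\sigma_{ap}(T) = \sigma_{ap}(T_M) \cup \{0\}$, so $0 \in \inter\,\sigma_{ap}(T)$ iff $0 \in \inter\,\sigma_{ap}(T_M)$; $H_0(T) = H_0(T_M) \oplus N$, which is closed iff $H_0(T_M)$ is closed; and $T$ has SVEP at $0$ iff $T_M$ does.

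Two of the implications are then essentially free and do not exploit the Kato structure. The implication (iii) $\Rightarrow$ (ii) is \eqref{SVEPap} applied to $T$ at $0$. For (i) $\Rightarrow$ (ii), suppose $f(\lambda) = \sum_{n \ge 0} \lambda^n a_n$ is analytic on a neighbourhood of $0$ with $(T - \lambda I)f(\lambda) = 0$. Matching coefficients yields $Ta_0 = 0$ and $Ta_n = a_{n-1}$ for $n \ge 1$, so $a_n \in N(T^{n+1}) \subseteq H_0(T)$. Since $H_0(T)$ is closed, $f(\lambda) \in H_0(T)$ for every $\lambda$ in the disc. But $f(\lambda) \in N(T - \lambda I)$ forces $\|T^n f(\lambda)\|^{1/n} = |\lambda|\,\|f(\lambda)\|^{1/n} \to |\lambda|$, while membership in $H_0(T)$ requires that limit to be $0$; hence $f(\lambda) = 0$ for every nonzero $\lambda$ in the disc, and by continuity $f \equiv 0$.

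The substantive step is (ii) $\Rightarrow$ (i) \& (iii). Assume $T$, equivalently $T_M$, has SVEP at $0$, and suppose for contradiction that $N(T_M) \ne \{0\}$. Pick $x_0 \in N(T_M) \setminus \{0\}$; since $T_M$ is Kato, $N(T_M) \subseteq R(T_M^n)$ for every $n$, the ``regular part'' $R(T_M^\infty) = \bigcap_n R(T_M^n)$ is closed, and $T_M$ maps $R(T_M^\infty)$ onto itself. The open mapping theorem furnishes $C > 0$ such that every $y \in R(T_M^\infty)$ admits a preimage in $R(T_M^\infty)$ of norm at most $C\|y\|$. Inductively choose $x_{n+1} \in R(T_M^\infty)$ with $T_M x_{n+1} = x_n$ and $\|x_{n+1}\| \le C\|x_n\|$, so that $\|x_n\| \le C^n \|x_0\|$. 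Then $f(\lambda) := \sum_{n \ge 0} \lambda^n x_n$ is analytic on $\{|\lambda| < 1/C\}$, satisfies $(T_M - \lambda I)f(\lambda) = 0$, and has $f(0) = x_0 \ne 0$, contradicting SVEP at $0$. Hence $T_M$ is injective; as $R(T_M)$ is closed, $T_M$ is bounded below, so $\|T_M^n x\| \ge k(T_M)^n\|x\|$ forces $H_0(T_M) = \{0\}$, proving (i); and openness of the set of bounded-below operators provides a disc around $0$ disjoint from $\sigma_{ap}(T_M)$, so $0 \notin \inter\,\sigma_{ap}(T_M) = \inter\,\sigma_{ap}(T)$, proving (iii).

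The main obstacle is the Kato input used in the third paragraph, namely the closedness of $R(T_M^\infty)$ together with the surjectivity of $T_M|_{R(T_M^\infty)}$ onto $R(T_M^\infty)$. Both are classical consequences of Kato's analysis of semi-regular operators, and I would cite them from M\"uller's monograph (already referenced in the paper) rather than reprove them; everything else is then routine direct-sum bookkeeping and the standard power-series manipulations above.
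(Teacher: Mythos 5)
Your proof is correct. Note that the paper itself does not prove this lemma --- it is simply quoted from \cite[Theorem 3.5]{kinezi} and \cite[Theorem 2.6]{aienaAmer} --- so any complete argument necessarily differs from ``the paper's proof''; what you have written is essentially a self-contained reconstruction of the standard arguments behind those citations. The reduction to the Kato summand is sound: for the quasinilpotent part one has $\sigma_{ap}(T_N)\subseteq\{0\}$, $H_0(T_N)=N$ and automatic SVEP, and for a closed set $A$ the equivalence $0\in\inter\,(A\cup\{0\})\Leftrightarrow 0\in\inter\,A$ holds, so the three conditions do localize to $T_M$. Your two ``free'' implications, (i) $\Rightarrow$ (ii) via the coefficient identities $Ta_0=0$, $Ta_n=a_{n-1}$ and the growth of $\|T^nf(\lambda)\|^{1/n}$, and (iii) $\Rightarrow$ (ii) via \eqref{SVEPap}, are valid and indeed require no GKD, which is consistent with the known fact that only the remaining direction uses the decomposition. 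The substantive step --- manufacturing a nonzero analytic solution $f(\lambda)=\sum_n\lambda^nx_n$ of $(T_M-\lambda I)f(\lambda)=0$ from a nonzero $x_0\in N(T_M)\subseteq R(T_M^\infty)$, using that $R(T_M^\infty)$ is closed and that $T_M$ maps it onto itself, and then deducing that SVEP at $0$ forces $T_M$ to be injective, hence bounded below, hence $H_0(T_M)=\{0\}$ and $0\notin\sigma_{ap}(T_M)$ --- is exactly the classical characterization of SVEP at $0$ for Kato (semi-regular) operators, and the two Kato facts you defer to M\"uller's monograph are indeed standard there. The logical structure (i) $\Rightarrow$ (ii), (iii) $\Rightarrow$ (ii), (ii) $\Rightarrow$ (i) and (iii) closes the cycle, so nothing is missing.
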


\noindent For more comprehensive study of the SVEP we refer the reader to \cite{aienabook} and \cite{LN}.

\section{Koliha-Drazin invertible operators}

First, we are concerned with the relationships between different parts of the spectrum.

\begin{theorem}\label{Theorem_1}
	Let $T \in L(X)$. Then
	
	\snoi {\rm (i)} $\iso \, \sigma_{ap}(T) \subset \iso \, \sigma(T) \cup \inter \, \sigma_{desc}(T)$;
	
	\snoi {\rm (ii)} $\iso \, \sigma_{su}(T) \subset \iso \, \sigma(T) \cup \inter \, \sigma_{LD}(T)$;
	
	\snoi {\rm (iii)} $\iso \, \sigma_{uw}(T) \subset \iso \, \sigma_b(T) \cup \inter \, \sigma_{desc}(T)$;
	
	\snoi {\rm (iv)} $\iso \, \sigma_{lw}(T) \subset \iso \, \sigma_b(T) \cup \inter \, \sigma_{LD}(T)$;
	
	\smallskip
	
	\noindent {\rm (v)} $\sigma(T)=\sigma_{ap}(T) \cup \inter \, \sigma_{desc}(T)$;
	
	\smallskip
	
	\noindent {\rm (vi)} $\sigma(T)=\sigma_{su}(T) \cup \inter \, \sigma_{LD}(T)$;
	
	\smallskip
	
	\noindent {\rm (vii)} $\sigma_b(T)=\sigma_{uw}(T) \cup \inter \, \sigma_{desc}(T)$;

	\smallskip
	
	\noindent {\rm (viii)} $\sigma_b(T)=\sigma_{lw}(T) \cup \inter \, \sigma_{LD}(T)$. 
	
	
\end{theorem}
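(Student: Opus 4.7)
The eight items split into two layers: (v)--(viii) are equalities between spectra, and (i)--(iv) are isolation statements that follow from them. My plan is to establish (v)--(viii) first and then derive (i)--(iv) through a uniform application of Lemma \ref{opsta}. Items (v)/(vi) and (vii)/(viii) are dual pairs, so I would prove one of each and indicate the other by duality.

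For (v) the key observation is that $\partial\sigma(T)\subset\sigma_{ap}(T)$, which makes $\sigma(T)\setminus\sigma_{ap}(T)$ an open subset of $\sigma(T)$. For any $\mu$ in this set, $T-\mu I$ is bounded below (so $p(T-\mu I)=0$) yet not invertible; if $q(T-\mu I)$ were finite then $p=q=0$ would force invertibility, so $\mu\in\sigma_{desc}(T)$. Being open and contained in $\sigma_{desc}(T)$, this set lies in $\inter\sigma_{desc}(T)$, giving (v); statement (vi) follows dually from $\partial\sigma(T)\subset\sigma_{su}(T)$ and the observation that $q=0$ together with $p<\infty$ forces invertibility.

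For (vii), the inclusion $\sigma_{uw}(T)\cup\inter\sigma_{desc}(T)\subset\sigma_b(T)$ is immediate from Browder $\Rightarrow$ Weyl $\Rightarrow$ upper semi-Weyl and Browder $\Rightarrow q<\infty$. For the converse, fix $\lambda_0\in\sigma_b(T)\setminus\sigma_{uw}(T)$. Kato's perturbation theorem for the upper semi-Fredholm operator $T-\lambda_0 I$ yields $\varepsilon>0$ such that on $0<|\mu-\lambda_0|<\varepsilon$ the operator $T-\mu I$ is Kato and upper semi-Fredholm with constant $\alpha,\beta$ and the same index $\le 0$; hence $\beta\ge\alpha$ on the punctured disc. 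If $\alpha=\beta=0$ there, then $T-\mu I$ is invertible on the punctured disc, $\lambda_0$ is isolated in $\sigma(T)$, and the standard fact that an isolated spectral point of a semi-Fredholm operator is Browder (the Koliha--Drazin decomposition forces the quasinilpotent part onto a finite-dimensional subspace on which it is nilpotent) contradicts $\lambda_0\in\sigma_b(T)$. Otherwise $\beta>0$ on the punctured disc, and the Kato property forces $q(T-\mu I)=\infty$ there, so the punctured disc sits in $\sigma_{desc}(T)$; closedness of $\sigma_{desc}(T)$ closes the disc, delivering $\lambda_0\in\inter\sigma_{desc}(T)$. Item (viii) is dual: lower semi-Weyl and $\alpha\ge\beta$ replace upper semi-Weyl and $\beta\ge\alpha$, and a non-injective Kato operator has $p=\infty$ and so lies in $\sigma_{LD}(T)$.

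Finally, (i)--(iv) all follow the same template via Lemma \ref{opsta}. For (i), take $E=\sigma_{ap}(T)$ and $F=\sigma(T)$, for which $\partial F\subset E\subset F$ is standard. Given $\lambda_0\in\iso\sigma_{ap}(T)$, either $\lambda_0\in\partial\sigma(T)$, in which case Lemma \ref{opsta} yields $\lambda_0\in\iso\sigma(T)$; or $\lambda_0\in\inter\sigma(T)$, in which case a punctured disc around $\lambda_0$ lies in $\sigma(T)\setminus\sigma_{ap}(T)\subset\inter\sigma_{desc}(T)$ by (v), and closedness of $\sigma_{desc}(T)$ then gives $\lambda_0\in\inter\sigma_{desc}(T)$. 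The analogue for (iii) takes $E=\sigma_{uw}(T)$, $F=\sigma_b(T)$, and the required $\partial\sigma_b(T)\subset\sigma_{uw}(T)$ follows from (vii) itself: any $\lambda_0\in\sigma_b(T)\setminus\sigma_{uw}(T)$ lies in the open set $\inter\sigma_{desc}(T)\subset\sigma_b(T)$ and so is interior to, not boundary of, $\sigma_b(T)$. Items (ii) and (iv) are dual. The main obstacle is the case analysis in (vii)/(viii): extracting the Kato property and the constancy of $\alpha,\beta$ in the punctured disc from Kato's perturbation theorem, and ruling out the ``invertible on the punctured disc'' case via the Koliha--Drazin argument sketched above, is where the real work of the theorem concentrates.
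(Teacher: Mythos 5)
Your proposal is correct, but its architecture and its treatment of (vii)/(viii) differ genuinely from the paper's. The paper proves (i)--(iv) directly and then reruns the ``same analysis'' for (v)--(viii): in each case it picks a sequence $\lambda_n\to\lambda_0$ along which the relevant conditions combine, via the purely algebraic ascent/descent facts of \cite[Theorem 3.4]{aienabook} (bounded below plus finite descent gives invertibility; upper semi-Weyl plus finite descent gives Browder), to make $T-\lambda_n I$ invertible or Browder, places $\lambda_0$ on $\partial \sigma(T)$ or $\partial \sigma_b(T)$, and finishes with Lemma \ref{opsta} and the boundary inclusions $\partial\sigma(T)\subset\sigma_{ap}(T)$ and $\partial\sigma_b(T)\subset\sigma_{uw}(T)$, the latter imported from \cite{voki}. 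You invert this: you establish the equalities (v)--(viii) first and deduce (i)--(iv) from them through the dichotomy $\lambda_0\in\partial F$ versus $\lambda_0\in{\rm int}\,F$, Lemma \ref{opsta}, and the closedness of $\sigma_{desc}(T)$ and $\sigma_{LD}(T)$. Your (v)/(vi) are the paper's argument in topological rather than sequential dress; but your (vii)/(viii) replace the paper's algebraic $\alpha$--$\beta$ versus ascent--descent interplay with the punctured neighbourhood theorem (constancy of $\alpha,\beta$, preservation of the index, Kato property of $T-\mu I$ near $\lambda_0$) followed by the case split ``invertible on the punctured disc'' (correctly ruled out because an isolated spectral point of a semi-Fredholm operator is Browder) versus ``non-surjective (resp.\ non-injective) Kato on the punctured disc'' (correctly forcing infinite descent, resp.\ infinite ascent, there). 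What your route buys is self-containedness --- the inclusion $\partial\sigma_b(T)\subset\sigma_{uw}(T)$ falls out of your (vii) instead of being cited --- at the cost of heavier perturbation-theoretic machinery where the paper gets by with elementary ascent/descent lemmas.
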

\begin{proof}
	{\rm (i)}.	Let $\lambda_0 \in \iso \, \sigma_{ap}(T)\setminus\inter \, \sigma_{desc}(T)$. Then there exists a sequence $(\lambda_n)$ converging to $\lambda_0$ such that $T-\lambda_n I$ is bounded below and $q(T-\lambda_n I)$ is finite for all $n \in \mathbb{N}$. From \cite[Theorem 3.4, part (ii)]{aienabook} it follows that $T-\lambda_nI$ is invertible for every $n \in \mathbb{N}$ and that $\lambda_0$ is a boundary point of $\sigma(T)$. Now, applying Lemma \ref{opsta} with $F=\sigma(T)$ and $E=\sigma_{ap}(T)$ gives $\lambda_0 \in \iso \, \sigma(T)$.
	
	\smallskip
	
	\noindent {\rm (ii)}. Use $\partial \sigma(T) \subset \sigma_{su}(T) \subset \sigma(T)$ and \cite[Theorem 3.4, part (i)]{aienabook}, and proceed analogously to the proof of {\rm (i)}.
	
	\smallskip
	
	\noindent {\rm (iii)}. Let $\lambda_0 \in \iso \, \sigma_{uw}(T) \setminus \inter \, \sigma_{desc}(T)$. Then there exists a sequence $(\lambda_n)$, $\lambda_n \to \lambda_0$ as $n \to \infty$, such that $q(T-\lambda_nI)$ is finite and  $T-\lambda_nI$ is upper semi-Weyl for all $n \in \mathbb{N}$. Consequently, $\alpha(T-\lambda_nI) \leq \beta(T-\lambda_nI)$, $n \in \mathbb{N}$. Further, we have  $\beta(T-\lambda_nI) \leq \alpha(T-\lambda_nI)$ by \cite[Theorem 3.4, part (ii)]{aienabook}, so $\alpha(T-\lambda_nI)=\beta(T-\lambda_nI)<\infty$. We conclude from \cite[Theorem 3.4, part (iv)]{aienabook} that $p(T-\lambda_nI)$ is finite, hence that $T-\lambda_n I$ is Browder, and finally that $\lambda_0 \in \partial \sigma_b(T)$. Since $\partial \sigma_b(T) \subset \sigma_{uw}(T) \subset \sigma_b(T)$ \cite[Corollary 2.5]{voki}, $\lambda_0 \in \iso \, \sigma_b(T)$ by Lemma \ref{opsta}.
	
	\smallskip
	
	\noindent {\rm (iv)}. Using \cite[Corollary 2.5]{voki} and a standard duality argument we obtain $\partial \sigma_b(T) \subset \sigma_{lw}(T) \subset \sigma_b(T)$. Now, the result follows by the same method as in {\rm (iii)}.
	
	\smallskip
	
	\noindent {\rm (v).} Since $\sigma_{ap}(T) \cup \inter \, \sigma_{desc}(T) \subset \sigma(T)$ is clear, it suffices to show the opposite inclusion. Let $\lambda_0  \in \sigma(T)$ and $\lambda_0 \not \in \sigma_{ap}(T) \cup \inter \, \sigma_{desc}(T)$.  As in the proof of (i), we conclude $\lambda_0 \in \partial \sigma(T)$, and hence $\lambda_0 \in\sigma_{ap}(T)$, which is a  contradiction.
	
	\smallskip 
	
	\noindent {\rm (vii).} The inclusion  $\sigma_{uw}(T) \cup \inter \, \sigma_{desc}(T) \subset \sigma_{b}(T)$ is clear. Let $\lambda_0\in \sigma_{b}(T)$ and $\lambda_0 \not \in \sigma_{uw}(T) \cup \inter \, \sigma_{desc}(T)$. Then analysis similar to that in the proof of (iii) shows that $\lambda_0 \in \partial \sigma_b(T)$. This is impossible since $\partial \sigma_b(T) \subset \sigma_{uw}(T)$.
	
	\smallskip
	
	\noindent {\rm (vi)} and {\rm (viii).} Apply an argument similar to the one used in the proof of statements {\rm (v)} and {\rm (vii)}, respectively.
\end{proof} 


In the following result we give further characterizations of Koliha-Drazin invertible operators.

\begin{theorem}\label{Tequiv}
	Let $T \in L(X)$. The following conditions are equivalent:
	
	\smallskip
	
	\noindent {\rm (i)} $T$ is Koliha-Drazin invertible;
	
    \smallskip
	
	\noindent {\rm (ii)} $0 \not \in \inter \, \sigma_{desc}(T)$ and there exists $(M,N) \in Red(T)$ such that $T_M$ is upper semi-Weyl and $T_N$ is quasinilpotent;
	
	\smallskip
	
	\noindent {\rm (iii)} $0 \not \in \inter \, \sigma_{LD}(T)$ and there exists $(M,N) \in Red(T)$ such that $T_M$ is lower semi-Weyl and $T_N$ is quasinilpotent.
\end{theorem}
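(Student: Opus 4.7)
The plan is to deduce the forward implications directly from Koliha's structure theorem and then establish the converses by upgrading the given semi-Weyl decomposition to a genuine Koliha-Drazin decomposition, using the hypothesis on $\sigma_{desc}$ (resp.\ $\sigma_{LD}$) to control the summand $T_M$ near $0$.

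For (i) $\Rightarrow$ (ii), (iii): I would invoke \cite[Theorem 7.1]{koliha} to write $T = T_1 \oplus T_2$ with $T_1$ invertible (hence both upper and lower semi-Weyl) and $T_2$ quasinilpotent, which supplies the decomposition required in both (ii) and (iii). Since $0 \not\in \acc\,\sigma(T)$, a deleted disc around $0$ lies in the resolvent set of $T$, so $0$ cannot be an interior point of either $\sigma_{desc}(T)$ or $\sigma_{LD}(T)$.

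For (ii) $\Rightarrow$ (i) I would write $T = T_M \oplus T_N$ as in (ii) and pick a sequence $\lambda_n \to 0$ with $q(T - \lambda_n I) < \infty$. Since $\sigma(T_N) = \{0\}$, the summand $T_N - \lambda_n I$ is invertible for large $n$, so $q(T_M - \lambda_n I) < \infty$; by semi-Fredholm stability $T_M - \lambda_n I$ remains upper semi-Weyl, and combining $\alpha(T_M - \lambda_n I) \le \beta(T_M - \lambda_n I)$ (upper semi-Weyl) with $\beta(T_M - \lambda_n I) \le \alpha(T_M - \lambda_n I)$ (finite descent, via \cite[Theorem 3.4(ii)]{aienabook}, exactly as in the proof of Theorem \ref{Theorem_1}(iii)) yields $\alpha(T_M - \lambda_n I) = \beta(T_M - \lambda_n I) < \infty$ together with finite ascent, so that $T_M - \lambda_n I$ is Browder. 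Browder points have a deleted neighbourhood of invertibility, so near each $\lambda_n$ one can select $\mu_n$ with $T_M - \mu_n I$ invertible and $\mu_n \to 0$. The local constancy of the nullity of an upper semi-Fredholm operator on a punctured disc, combined with continuity of the index, then forces $T_M - \lambda I$ to be bijective throughout some punctured disc around $0$; coupled with invertibility of $T_N - \lambda I$ for small $\lambda \ne 0$, this shows $T - \lambda I$ is invertible on a full punctured neighbourhood of $0$, so $0 \not\in \acc\,\sigma(T)$ and $T$ is Koliha-Drazin invertible.

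The implication (iii) $\Rightarrow$ (i) is perfectly symmetric: the hypothesis $0 \not\in \inter\,\sigma_{LD}(T)$ produces $\lambda_n \to 0$ with $p(T_M - \lambda_n I) < \infty$, and lower semi-Weyl (giving $\beta \le \alpha$ with $\beta < \infty$) combined with finite ascent (giving $\alpha \le \beta$) again forces Browderness, after which the argument proceeds identically. The main obstacle I anticipate is precisely the passage from the good sequence $(\lambda_n)$, where one only obtains Browderness rather than invertibility of $T_M - \lambda_n I$, to invertibility of $T_M - \lambda I$ on an entire punctured disc around $0$; this upgrade rests on the local constancy of the semi-Fredholm nullity rather than on any direct spectral estimate, and must be arranged carefully so that the invertible points $\mu_n$ actually populate the relevant punctured disc.
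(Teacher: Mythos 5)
Your proof is correct, but it reaches the converses by a genuinely more self-contained route than the paper. The forward implications are identical. For (ii) $\Longrightarrow$ (i) the paper outsources the work: it cites \cite[Theorem 3.4]{caot} to translate the decomposition hypothesis into ``$T$ admits a GKD and $0 \not\in \acc\,\sigma_{uw}(T)$'', upgrades this to $0 \not\in \acc\,\sigma_b(T)$ via its own Theorem \ref{Theorem_1}, parts (iii) and (vii), and then invokes \cite[Theorem 3.9]{caot} to conclude Koliha--Drazin invertibility; the equivalence with (iii) is handled symmetrically through parts (iv) and (viii). The Browderness computation you perform on $T_M-\lambda_n I$ (index $\leq 0$ from the semi-Weyl hypothesis played against $\beta\leq\alpha$ from finite descent, or the dual pairing for (iii)) is exactly the one the paper runs inside the proof of Theorem \ref{Theorem_1}(iii)--(iv), so that core is shared; the divergence is in the final step, where the paper passes through boundary and isolated-point considerations (Lemma \ref{opsta}) plus the GKD machinery of \cite{caot}, while you use Kato's punctured neighbourhood theorem --- local constancy of $\alpha(T_M-\lambda I)$ and of the index on a punctured disc about $0$ --- to promote the sequence of invertible points $\mu_n$ to invertibility of $T_M-\lambda I$ on a whole punctured disc, whence $0\not\in\acc\,\sigma(T)$ and Koliha's characterization finishes the argument. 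Your version buys independence from \cite{caot} at the cost of importing the punctured neighbourhood theorem, which the paper never needs explicitly; the paper's version is shorter and reuses Theorem \ref{Theorem_1} as a black box. Two routine points should be made explicit in a final write-up: the approximating points $\lambda_n$ (and hence the $\mu_n$) can be taken nonzero because $\sigma_{desc}(T)$ and $\sigma_{LD}(T)$ are closed, and $q(T-\lambda_n I)<\infty$ descends to $q(T_M-\lambda_n I)<\infty$ because ranges (and kernels) of powers split along $M\oplus N$.
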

\begin{proof}
	{\rm (i)} $\Longrightarrow$ {\rm (ii)}. There exists a pair $(M,N) \in Red(T)$ such that $T_M$ is invertible and $T_N$ is quasinilpotent. Clearly, $T_M$ is upper semi-Weyl and $0 \not \in \acc \, \sigma(T)$.	Since $\inter \, \sigma_{desc}(T) \subset \inter \, \sigma(T) \subset \acc \, \sigma(T)$ then $0 \not \in \inter \, \sigma_{desc}(T)$.
	
	\smallskip
	
	\noindent {\rm (ii)} $\Longrightarrow$ {\rm (i)}. According to \cite[Theorem 3.4]{caot}, $T$ admits a {\rm GKD} and $0 \not \in \acc \, \sigma_{uw}(T)$. We will prove that $0 \not \in \acc \, \sigma_{b}(T)$. If $0 \in \iso \, \sigma_{uw}(T)$, then Theorem \ref{Theorem_1}, part {\rm (iii)}, implies $0 \in \iso \, \sigma_{b}(T)$. If $0 \not \in \sigma_{uw}(T)$, then  $0 \not \in \sigma_{b}(T)$ by Theorem \ref{Theorem_1}, part {\rm (vii)}. Applying \cite[Theorem 3.9]{caot} we obtain the desired conclusion.
	
	\smallskip
	
	\noindent {\rm (i)} $\Longleftrightarrow$ {\rm (iii)}. Apply an argument similar to the one used in the proof of the equivalence {\rm (i)} $\Longleftrightarrow$ {\rm (ii)}, and use parts (iv) and (viii) of Theorem \ref{Theorem_1}.
\end{proof}

Recently, P. Aiena and S. Triolo proved that if $H_0(T)$ is closed then $0 \in \iso \, \sigma_{su}(T)$ if and only if $0 \in \iso \, \sigma(T)$ \cite[Theorem 2.4]{Salvatore}. In what follows we show that under an additional assumption this assertion remains valid if the surjective spectrum is replaced by the lower Weyl spectrum of $T$.

\begin{corollary}\label{cor-iso}
	Let $T \in L(X)$ such that $0 \in \sigma_{lw}(T)$. Suppose that $T$ admits a GKD and that $H_0(T)$ is closed. Then:
	
	\smallskip
	
	$$0 \in \iso \, \sigma_{lw}(T) \Longleftrightarrow 0 \in \iso \, \sigma(T).$$	
\end{corollary}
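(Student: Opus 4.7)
The equivalence has one easy direction. If $0 \in \iso \, \sigma(T)$, then $T - \lambda I$ is invertible, hence lower semi-Weyl, for all $\lambda$ in some punctured disc around $0$; combined with the hypothesis $0 \in \sigma_{lw}(T)$, this gives $0 \in \iso \, \sigma_{lw}(T)$. This implication uses neither the GKD nor the closedness of $H_0(T)$.

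For the reverse direction, suppose $0 \in \iso \, \sigma_{lw}(T)$; the goal is to show that $T - \lambda I$ is in fact invertible on a punctured disc about $0$, which would immediately yield $0 \in \iso \, \sigma(T)$. Our two standing hypotheses together with Lemma \ref{lema1} give that $T$ has the SVEP at $0$. Using the GKD, write $T = T_M \oplus T_N$ with $T_M$ Kato and $T_N$ quasinilpotent. Since SVEP passes to direct summands, $T_M$ inherits SVEP at $0$ from $T$.

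The core observation is the well-known fact that a Kato operator with SVEP at $0$ is injective and, since Kato operators have closed range, bounded below. Briefly: given a nonzero $x \in N(T_M)$, the inclusion $N(T_M) \subset R(T_M^n)$ and the open-mapping estimate for the closed-range map $T_M$ produce a sequence $(x_n)$ with $T_M x_{n+1} = x_n$, $x_0 = x$, and geometric norm bounds, so $f(\lambda) = \sum_{n\ge 0} \lambda^n x_n$ defines an analytic function on a disc about $0$ satisfying $(T_M - \lambda I)f(\lambda) = 0$ and $f(0) = x \ne 0$, contradicting SVEP at $0$.

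Now being bounded below is an open condition, and $\sigma(T_N) = \{0\}$, so on some punctured disc $0 < |\lambda| < \delta$ we simultaneously have $T_M - \lambda I_M$ bounded below, $T_N - \lambda I_N$ invertible (hence $T - \lambda I$ bounded below, so $\alpha(T - \lambda I) = 0$), and $T - \lambda I$ lower semi-Weyl by the assumption on $\sigma_{lw}(T)$. Lower semi-Weyl combined with $\alpha = 0$ forces $\beta(T - \lambda I) \le \alpha(T - \lambda I) = 0$, so $T - \lambda I$ is invertible on that punctured disc and $0 \in \iso \, \sigma(T)$. The only subtle point is citing the Kato-plus-SVEP-implies-injectivity lemma correctly; the remainder is direct-sum bookkeeping with $\alpha$ and $\beta$.
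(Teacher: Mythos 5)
Your proof is correct, and the forward direction coincides with the paper's, but your converse follows a genuinely different route. The paper derives from $0 \in \iso \, \sigma_{lw}(T)$ and the GKD (via \cite[Theorem 3.4]{caot}) a decomposition $T=T_M\oplus T_N$ with $T_M$ lower semi-Weyl and $T_N$ quasinilpotent, observes $0 \notin \inter \, \sigma_{LD}(T)$ from Lemma \ref{lema1}, and then feeds these two facts into its characterization Theorem \ref{Tequiv} (iii) to get Koliha-Drazin invertibility, hence $0\in\iso \, \sigma(T)$. You instead work directly with the Kato part of the GKD: SVEP at $0$ (Lemma \ref{lema1}) passes to the summand $T_M$, the classical fact that a Kato operator with SVEP at $0$ is injective (hence, having closed range, bounded below) applies, and then openness of the bounded-below class, invertibility of $T_N-\lambda I_N$ for $\lambda\neq 0$, and the index inequality $\beta(T-\lambda I)\le\alpha(T-\lambda I)=0$ supplied by lower semi-Weylness force $T-\lambda I$ to be invertible on a punctured disc. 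Your argument is more elementary and self-contained --- it bypasses Theorem \ref{Tequiv} and the external reference \cite{caot} entirely, and it isolates exactly where the hypothesis $0\in\iso \, \sigma_{lw}(T)$ enters (to upgrade bounded below to invertible via the index) --- whereas the paper's route is shorter on the page and exhibits the corollary as an application of its main theorem. The one step you should cite rather than sketch is the Kato-plus-SVEP-implies-injective lemma (it is standard and can be found in \cite{aienabook}); your outline of the pull-back construction is the usual proof, with the geometric norm bounds coming from the reduced minimum modulus of the closed-range operator $T_M$ and the successive preimages kept available by the Kato condition $N(T_M)\subset R(T_M^n)$ for all $n$.
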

\begin{proof}
	If $0 \in \iso \, \sigma(T)$ it follows immediately that $0 \not \in \acc \, \sigma_{lw}(T)$. Consequently, $0 \in \iso \, \sigma_{lw}(T)$.
	
	Suppose that $0 \in \iso \, \sigma_{lw}(T)$. From \cite[Theorem 3.4]{caot} we see that $T$ can be represented as $T=T_M \oplus T_N$ with $T_M$ lower semi-Weyl and $T_N$ quasinilpotent. On the other hand, $0 \not \in \inter \, \sigma_{ap}(T)$ by Lemma \ref{lema1}. Obviously, $0 \not \in \inter \, \sigma_{LD}(T)$, and from Theorem \ref{Tequiv} it follows that $T$ is Koliha-Drazin invertible, and hence $0 \in \iso \, \sigma(T)$.
\end{proof}

Corollary \ref{cor-iso} may fail when we drop the assumption that $T \in L(X)$ admits a GKD or that $H_0(T)$ is closed.

\begin{example}
	{\em Let $T \in L(H)$, $H$ is a complex Hilbert space, be a compact normal operator such that $\sigma(T)$ is an infinite subset of $\mathbb{C}$. Since every compact operator is Riesz, $T$ does not admit a GKD (see  Remark 5.3 in \cite{caot}). Moreover, $H_0(T)=N(T)$ \cite[Example 3.9]{aienabook}, so $H_0(T)$ is closed. On the other hand, $0 \in \iso \, \sigma_{lw}(T)$, but $0 \in \acc \, \sigma(T)$.
		
		A standard way to construct such an operator is as follows. Let $H$ be a separable Hilbert space with scalar product $(\,,\,)$ and let $\{e_n:n \in \mathbb{N}\}$ be a total orthonormal system in $H$. If $(\lambda_n)$ is a sequence of complex numbers such that $(|\lambda_n|)$ is a decresing sequence converging to zero, then we define
		$$Tx=\sum_{n=1}^{\infty} \lambda_n(x, e_n)e_n \; \; \text{for all} \; \; x \in H.$$
		It is not too hard to check that $T$ is a compact normal operator and that $\sigma(T)=\{\lambda_n:n \in \mathbb{N}\} \cup \{0\}$.}
\end{example}

\begin{example}
	{\em Define $T=L \oplus O$, where $L$ and $O$ are the unilateral left shift and the zero operator on the Hilbert space $\ell_2(\mathbb{N})$, respectively. The spectrum and the surjective spectrum of $L$ are given by $\sigma(L)=\mathbb{D}$ and $\sigma_{su}(L)=\Gamma$, where $\mathbb{D}=\{\lambda \in \mathbb{C}:|\lambda| \leq 1\}$ and $\Gamma=\{\lambda \in \mathbb{C}:|\lambda|=1\}$. Since $L$ is surjective and $O$ is quasinilpotent, then $T$ admits a GKD. In addition, $H_0(T)$ is not closed (see Remark 2.6 in \cite{Salvatore}). We have
		\begin{equation}\label{jed-2}
		\sigma_{lw}(T) \subset \sigma_{su}(T)=\sigma_{su}(L) \cup \sigma_{su}(O)=\Gamma \cup \{0\}.
		\end{equation}
		Further, $\beta(T)=\beta(L)+\beta(O)=0+\infty=\infty$, and hence $0 \in \sigma_{lw}(T)$. From this and from \eqref{jed-2} we deduce that $0 \in \iso \, \sigma_{lw}(T)$. On the other hand, $\sigma(T)=\sigma(L) \cup \sigma(O)=\mathbb{D}$, and so $0 \in \acc \, \sigma(T)$.}
\end{example}

We obtain new characterizations of Drazin invertible operators if we consider the case where $T_N$ is nilpotent. 

\begin{theorem}\label{Tequiv1}
	Let $T \in L(X)$. The following conditions are equivalent:
	
	\smallskip
	
	\noindent {\rm (i)} $T$ is Drazin invertible;
	
	\smallskip
	
	\noindent {\rm (ii)} $0 \not \in \inter \, \sigma_{desc}(T)$ and there exists $(M,N) \in Red(T)$ such that $T_M$ is bounded below and $T_N$ is nilpotent;
	
	\smallskip
	
	\noindent {\rm (iii)} $0 \not \in \inter \, \sigma_{desc}(T)$ and there exists $(M,N) \in Red(T)$ such that $T_M$ is upper semi-Weyl and $T_N$ is nilpotent;
	
	\smallskip
	
	\noindent {\rm (iv)} $0 \not \in \inter \, \sigma_{LD}(T)$ and there exists $(M,N) \in Red(T)$ such that $T_M$ is surjective and $T_N$ is nilpotent;
	
	\smallskip
	
	\noindent {\rm (v)} $0 \not \in \inter \, \sigma_{LD}(T)$ and there exists $(M,N) \in Red(T)$ such that $T_M$ is lower semi-Weyl and $T_N$ is nilpotent.
\end{theorem}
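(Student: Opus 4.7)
The plan is to mirror the proof of Theorem \ref{Tequiv}, substituting ``nilpotent'' for ``quasinilpotent'' in the decomposition statements, and then inserting the additional argument needed to upgrade Koliha-Drazin invertibility to Drazin invertibility. For {\rm (i)} $\Longrightarrow$ {\rm (ii)}, {\rm (iii)}, {\rm (iv)}, {\rm (v)} I would invoke the characterization $T = T_1 \oplus T_2$ with $T_1$ invertible and $T_2$ nilpotent: the summand $T_1$ is simultaneously bounded below, surjective, upper semi-Weyl, and lower semi-Weyl, so the decomposition part of all four conditions holds. Drazin invertibility implies Koliha-Drazin invertibility, hence $0 \not\in \acc \, \sigma(T)$, and the inclusions $\sigma_{desc}(T), \sigma_{LD}(T) \subset \sigma(T)$ then yield $0 \not\in \inter \, \sigma_{desc}(T)$ and $0 \not\in \inter \, \sigma_{LD}(T)$. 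The implications {\rm (ii)} $\Longrightarrow$ {\rm (iii)} and {\rm (iv)} $\Longrightarrow$ {\rm (v)} are immediate, since bounded below operators are upper semi-Weyl and surjective operators are lower semi-Weyl.

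The substantive step is {\rm (iii)} $\Longrightarrow$ {\rm (i)}; the implication {\rm (v)} $\Longrightarrow$ {\rm (i)} is obtained by the dual argument using the lower semi-Weyl direction of Theorem \ref{Tequiv}. Since every nilpotent operator is quasinilpotent, Theorem \ref{Tequiv} gives that $T$ is Koliha-Drazin invertible. Hence either $T$ is invertible (and thus Drazin), or $0 \in \iso \, \sigma(T)$. In the latter case I pick a sequence $\lambda_n \to 0$ with $T - \lambda_n I$ invertible; restricting to $M$ shows that $T_M - \lambda_n I$ is invertible as well, whence $T_M$ is itself Koliha-Drazin. Its canonical decomposition reads $T_M = A \oplus B$ with $A$ invertible and $B$ quasinilpotent, and combining $T_M$ upper semi-Weyl with $\ind A = 0$ forces $B$ to be upper semi-Fredholm with $\ind B = \ind T_M$. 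Local constancy of the Fredholm index on the connected set $\mathbb{C} \setminus \{0\}$, where $B - \lambda I$ is invertible, then yields $\ind B = 0$, so $B$ is Fredholm. Since an isolated spectral point of a Fredholm operator is a pole, $B$ is Browder; together with $\sigma(B) = \{0\}$ this forces $B$ to be nilpotent, because the invertible summand of its Browder decomposition must act on the trivial subspace. Consequently $T_M = A \oplus B$ is Drazin and so $T = T_M \oplus T_N$ is Drazin as well.

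The main obstacle is the last sub-argument: showing that the quasinilpotent summand $B$ supplied by the canonical Koliha-Drazin decomposition of $T_M$ is actually nilpotent. This is where the strengthening from ``quasinilpotent $T_N$'' to ``nilpotent $T_N$'' ultimately pays off: the additive behaviour of kernels, ranges and the index across the direct sum propagates Fredholmness from $T_M$ to $B$, and the Fredholm-pole principle then upgrades the isolated spectral point $\{0\}$ to finite ascent and descent of $B$, forcing nilpotency. Everything else in the proof is a routine combination of Theorem \ref{Tequiv}, the classical Drazin decomposition, and the behaviour of bounded below, surjective and semi-Weyl operators under direct sums.
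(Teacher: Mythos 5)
Your proposal is correct, and the easy implications ((i) $\Rightarrow$ all, (ii) $\Rightarrow$ (iii), (iv) $\Rightarrow$ (v)) match the paper; but for the substantive step (iii) $\Rightarrow$ (i) (and dually (v) $\Rightarrow$ (i)) you take a genuinely different route. The paper stays inside the machinery of \cite{caot}: it invokes \cite[Theorem 4.1]{caot} to get that $T$ is of Kato type with $0 \not\in \acc\,\sigma_{uw}(T)$, repeats the argument of Theorem \ref{Tequiv} to obtain $0 \not\in \acc\,\sigma_b(T)$, restricts to the Kato summand $T_{M_1}$ of the Kato-type pair $(M_1,N_1)$ (not the original $(M,N)$), and concludes from \cite[Proposition 3.2(iv)]{caot} that $T_{M_1}$ is invertible, so that $T=T_{M_1}\oplus T_{N_1}$ is already the classical Drazin decomposition. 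You instead first feed the hypothesis into Theorem \ref{Tequiv} to get Koliha--Drazin invertibility, then work with the original summand $T_M$: its canonical Koliha--Drazin decomposition $T_M=A\oplus B$, the additivity of $\alpha$, $\beta$ and closedness of ranges over direct sums, local constancy of the index near the quasinilpotent $B$, and the fact that an isolated spectral point of a Fredholm operator is a pole (i.e.\ \cite[Theorems 3.16 and 3.17]{aienabook} via SVEP at isolated points) together force $B$ to be nilpotent on a finite-dimensional space. All of these steps check out: in particular $R(B)=R(T_M)\cap M_B$ is closed, $\ind B=\ind T_M$, and $\sigma(B)=\{0\}$ with $B$ Browder does force $R(B^d)=\{0\}$ since an operator on a nonzero complex Banach space has nonempty spectrum. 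What your route buys is independence from \cite[Theorem 4.1 and Proposition 3.2(iv)]{caot} beyond what Theorem \ref{Tequiv} already consumes, at the price of a longer hands-on Fredholm argument; the paper's route is shorter and yields the Drazin decomposition directly from the Kato-type pair, but leans more heavily on the external references. One cosmetic simplification for your write-up: you do not need the sequence $\lambda_n\to 0$, since $\sigma(T_M)\subset\sigma(T)$ already gives $0\not\in\acc\,\sigma(T_M)$.
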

\begin{proof}
	{\rm (i)} $\Longrightarrow$ {\rm (ii)}. There exists $(M,N) \in Red(T)$ such that $T_M$ is invertible and $T_N$ is nilpotent. Evidently, $T_M$ is bounded below and $0 \not \in \acc \, \sigma(T)$. From $\inter \, \sigma_{desc}(T) \subset \inter \, \sigma(T) \subset \acc \, \sigma(T)$ we conclude $0 \not \in \inter \, \sigma_{desc}(T)$.
	
	\smallskip
	
	\noindent {\rm (ii)} $\Longrightarrow$ {\rm (iii)}. Every bounded below operator is upper semi-Weyl.
	
	\smallskip
	
	\noindent {\rm (iii)} $\Longrightarrow$ {\rm (i)}. By \cite[Theorem 4.1]{caot}, $T$ is of Kato type and $0 \not \in \acc \, \sigma_{uw}(T)$. It means that there exists $(M_1, N_1) \in Red(T)$ such that $T_{M_1}$ is Kato and $T_{N_1}$ is nilpotent. As in the proof of Theorem \ref{Tequiv} we conclude that $0 \not \in \acc \, \sigma_{b}(T)$. Consequently, $0 \not \in \acc \, \sigma_{b}(T_{M_1})$. According to \cite[Proposition 3.2(iv)]{caot}, $T_{M_1}$ is invertible. As a consequence, $T=T_{M_1} \oplus T_{N_1}$, where $T_{M_1}$ is invertible and $T_{N_1}$ is nilpotent, and hence $T$ is Drazin invertible.
	
	\smallskip
	
	The statements {\rm (i)} $\Longrightarrow$  {\rm (iv)} $\Longrightarrow$ {\rm (v)} $\Longrightarrow$ {\rm (i)} can be proved similarly.
\end{proof}

\begin{remark}
	{\em Let $H$ be a complex Hilbert space and $T \in L(H)$. The Kato type spectrum of $T$, denoted by $\sigma_{Kt}(T)$, is the set of all $\lambda \in \mathbb{C}$ such that $T-\lambda I$ is not of Kato type. 
		
	\smallskip
	
	\noindent {\rm (i)}. An operator $T \in L(H)$ is left (right) Drazin invertible if and only if $T=T_M \oplus T_N$, where $T_M$ is bounded below (surjective) and $T_N$ is nilpotent \cite[Theorems 3.6 and 3.12]{berkani}. These facts and Theorem \ref{Tequiv1} imply
$$\sigma_D(T)=\inter \, \sigma_{desc}(T) \cup \sigma_{LD}(T)=\inter \, \sigma_{LD}(T) \cup \sigma_{RD}(T).$$		
In particular, suppose that the spectrum of $T$ is contained in a line. Then $\inter \, \sigma_{desc}(T)=\inter \, \sigma_{LD}(T)=\emptyset$. Moreover, $\sigma_D(T)=\sigma_{Kt}(T) \cup \inter \, \sigma(T)=\sigma_{Kt}(T)$ by \cite[Theorem 4.1]{caot}. It follows that
\begin{equation}\label{spectra}
\sigma_D(T)=\sigma_{LD}(T)=\sigma_{RD}(T)=\sigma_{Kt}(T).
\end{equation}	

\smallskip

\noindent {\rm (ii).} We recall that $T \in L(H)$ is called normal if $TT^{\star}=T^{\star}T$, where $T^{\star}$ is the Hilbert-adjoint of $T$.  If $T \in L(H)$ is normal, then $T$ satisfies \eqref{spectra}. Indeed, from \cite[Theorems 3.6 and 3.12]{berkani} we conclude that $\sigma_{Kt}(T) \subset \sigma_{LD}(T)$ and $\sigma_{Kt}(T) \subset \sigma_{RD}(T)$. Evidently, $\sigma_{LD}(T) \subset \sigma_D(T)$ and $\sigma_{RD}(T) \subset \sigma_D(T)$, and it is sufficient to show that $\sigma_D(T) \subset \sigma_{Kt}(T)$. Let $\lambda \not \in \sigma_{Kt}(T)$. Then $T-\lambda I=T_1 \oplus T_2$ with $T_1$ Kato and $T_2$ nilpotent. For sufficiently large $n \in \mathbb{N}$ we have $R((T-\lambda I)^n)=R(T_1^n)$, so $R((T-\lambda I)^n)$ is closed. Since $(T-\lambda I)^n$ is normal, then \cite[Theorem 6.2-G]{Taylor} gives 
\begin{equation*} \label{normal}
H=N((T-\lambda I)^n) \oplus R((T-\lambda I)^n).
\end{equation*}		
Now, \cite[Theorem 3.6]{aienabook} implies that $p(T-\lambda I)=q(T-\lambda I) \leq n$, and hence $T-\lambda I$ is Drazin invertible.}
\end{remark}

\section{Browder type theorems}

We recall that $T \in L(X)$ is said to satisfy Browder\textquoteright s theorem if $\sigma_w(T)=\sigma_b(T)$.

\begin{corollary}\label{cor-eq}
	Let $T \in L(X)$. The following statements are equivalent:
	
	\smallskip
	
	\noindent {\rm (i)} Browder\textquoteright s theorem holds for $T$;
	
	\smallskip
	
	\noindent {\rm (ii)} $\inter \, \sigma_{desc}(T) \subset \sigma_w(T)$;
	
	\smallskip
	
	\noindent {\rm (iii)} $\inter \, \sigma_{su}(T) \subset \sigma_w(T)$;
	
	\smallskip
	
	\noindent {\rm (iv)} $\inter \, \sigma_{ap}(T) \subset \sigma_w(T)$.
	
	\smallskip
	
	In particular, if either $\sigma_{ap}(T)$ or $\sigma_{su}(T)$ has no interior points then Browder\textquoteright s theorem holds for $T$.
\end{corollary}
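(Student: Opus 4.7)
The plan is to leverage Theorem \ref{Theorem_1}, parts (vii) and (viii), which read $\sigma_b(T)=\sigma_{uw}(T)\cup\inter\,\sigma_{desc}(T)=\sigma_{lw}(T)\cup\inter\,\sigma_{LD}(T)$. Combined with the standing inclusions $\sigma_{uw}(T),\sigma_{lw}(T)\subset\sigma_w(T)\subset\sigma_b(T)$, this makes the equivalence (i)$\Longleftrightarrow$(ii) essentially one line: Browder\textquoteright s theorem $\sigma_w(T)=\sigma_b(T)$ holds exactly when $\inter\,\sigma_{desc}(T)\subset\sigma_w(T)$, because part (vii) says that $\inter\,\sigma_{desc}(T)$ is the only piece of $\sigma_b(T)$ that might escape $\sigma_{uw}(T)$. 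The same argument applied to (viii) produces the auxiliary equivalent condition $\inter\,\sigma_{LD}(T)\subset\sigma_w(T)$, which I will use as a stepping stone toward (iv).

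To reach (iii) and (iv), I would record two easy inclusions. A surjective operator has descent at most $1$, so $\sigma_{desc}(T)\subset\sigma_{su}(T)$ and hence $\inter\,\sigma_{desc}(T)\subset\inter\,\sigma_{su}(T)$; this gives (iii)$\Longrightarrow$(ii) immediately. Dually, a bounded below operator is injective with closed range, so $p(T)=0$ and $R(T^{p+1})=R(T)$ is closed, i.e.\ it is left Drazin invertible; hence $\sigma_{LD}(T)\subset\sigma_{ap}(T)$ and therefore $\inter\,\sigma_{LD}(T)\subset\inter\,\sigma_{ap}(T)\subset\sigma_w(T)$ under (iv), which combined with (viii) yields $\sigma_b(T)\subset\sigma_w(T)$, i.e.\ (i).

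For the converse implications (i)$\Longrightarrow$(iii) and (i)$\Longrightarrow$(iv), my approach is a single observation about Browder points. Since $\sigma_{ap}(T),\sigma_{su}(T)\subset\sigma(T)$, one has $\inter\,\sigma_{ap}(T),\inter\,\sigma_{su}(T)\subset\inter\,\sigma(T)$. If such a $\lambda$ were not in $\sigma_b(T)$, then $T-\lambda I$ would be Fredholm with finite ascent and descent, forcing $p(T-\lambda I)=q(T-\lambda I)<\infty$ and therefore $\lambda$ to be a pole of the resolvent; that is, $\lambda$ would lie outside $\sigma(T)$ or be isolated in it, contradicting $\lambda\in\inter\,\sigma(T)$. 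Hence $\lambda\in\sigma_b(T)=\sigma_w(T)$ under (i).

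The final clause is then free: if $\inter\,\sigma_{ap}(T)=\emptyset$ or $\inter\,\sigma_{su}(T)=\emptyset$, then (iv) respectively (iii) holds vacuously, so Browder\textquoteright s theorem follows. I do not foresee a genuine obstacle; the one point I would verify carefully is the Browder/pole correspondence invoked in (i)$\Longrightarrow$(iii),(iv), namely that a Fredholm operator with $p,q<\infty$ at $\lambda$ is either invertible or has $\lambda$ isolated in $\sigma(T)$, which is standard and follows for instance from the Drazin decomposition $T-\lambda I=T_1\oplus T_2$ with $T_1$ invertible and $T_2$ nilpotent.
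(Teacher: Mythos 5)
Your proof is correct, and it rests on the same engine as the paper's — Theorem \ref{Theorem_1}, parts (vii) and (viii), together with $\sigma_{uw}(T),\sigma_{lw}(T)\subset\sigma_w(T)\subset\sigma_b(T)$ — but it routes conditions (iii) and (iv) into the cycle differently. The paper disposes of (ii) $\Longleftrightarrow$ (iii) and (i) $\Longleftrightarrow$ (iv) in one stroke by citing the equalities $\inter\,\sigma_{desc}(T)=\inter\,\sigma_{su}(T)$ and $\inter\,\sigma_{LD}(T)=\inter\,\sigma_{ap}(T)$ from an external reference. You instead prove only the elementary one-sided inclusions $\sigma_{desc}(T)\subset\sigma_{su}(T)$ and $\sigma_{LD}(T)\subset\sigma_{ap}(T)$ (giving (iii) $\Longrightarrow$ (ii) and (iv) $\Longrightarrow$ (i)), and close the cycle with a separate observation: under Browder's theorem, $\inter\,\sigma_{ap}(T)\cup\inter\,\sigma_{su}(T)\subset\inter\,\sigma(T)\subset\acc\,\sigma(T)\subset\sigma_b(T)=\sigma_w(T)$, since a Browder point is a pole of the resolvent or a regular point and hence never an accumulation point of $\sigma(T)$. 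This makes your argument self-contained where the paper leans on \cite[Corollary 2.18 (5)]{TUD}, at the modest cost of an extra step; the paper's version buys brevity and, implicitly, the stronger standalone fact that the three interiors $\inter\,\sigma_{desc}(T)$, $\inter\,\sigma_{su}(T)$, $\inter\,\sigma_{LD}(T)$-versus-$\inter\,\sigma_{ap}(T)$ coincide independently of any Browder hypothesis. All of your individual steps check out (the descent of a surjective operator is in fact $0$, not merely $\le 1$, but nothing depends on this), and the final clause is handled exactly as in the paper.
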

\begin{proof}
	{\rm (i)} $\Longleftrightarrow$ {\rm (ii)}. Since $\sigma_{uw}(T) \subset \sigma_w(T) \subset \sigma_b(T)$, $\sigma_b(T)=\sigma_{w}(T) \cup \inter \, \sigma_{desc}(T)$ by Theorem \ref{Theorem_1}, part {\rm (vii)}. Now, the result follows immediately.
	
	\smallskip
	
	\noindent {\rm (ii)} $\Longleftrightarrow$ {\rm (iii)}. Follows from $\inter \, \sigma_{desc}(T)=\inter \, \sigma_{su}(T)$ \cite[Corollary 2.18 (5)]{TUD}.
	
	\smallskip
	
	\noindent {\rm (i)} $\Longleftrightarrow$ {\rm (iv)}. Observe that $\sigma_b(T)=\sigma_w(T) \cup \inter \, \sigma_{ap}(T)$ by Theorem \ref{Theorem_1}, part (viii), and \cite[Corollary 2.18 (5)]{TUD}.
	
	\smallskip
	
	To prove the last assertion, apply {\rm (iii)} $\Longrightarrow$ {\rm (i)} or {\rm (iv)} $\Longrightarrow$ {\rm (i)}.
\end{proof}

An operator $T \in L(X)$ satisfies  a-Browder\textquoteright s   theorem if $\sigma_{uw}(T)=\sigma_{ub}(T)$.

\begin{theorem}\label{cor-aBrowder}
	Let $T \in L(X)$. The following conditions are equivalent:
	
	\smallskip
	
	\noindent {\rm (i)} $\inter \, \sigma_{desc}(T) \subset \sigma_{uw}(T)$;
	
	\smallskip
	
	\noindent {\rm (ii)} $T^{\prime}$ has {\rm SVEP} at every $\lambda \not \in \sigma_{uw}(T)$;
	
	\smallskip
	
	\noindent {\rm (iii)} $\sigma_{desc}(T) \subset \sigma_{uw}(T)$;
	
	\smallskip
	
	\noindent {\rm (iv)} a-Browder\textquoteright s theorem holds for $T$ and $\sigma(T)=\sigma_{ap}(T)$.
\end{theorem}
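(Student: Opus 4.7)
The plan is to establish all four equivalences through the cycle (i) $\Longrightarrow$ (ii) $\Longrightarrow$ (iii) $\Longrightarrow$ (i), augmented by the pair (iii) $\Longrightarrow$ (iv) and (iv) $\Longrightarrow$ (ii) which closes the diagram. The implication (iii) $\Longrightarrow$ (i) is immediate from $\inter \, \sigma_{desc}(T) \subset \sigma_{desc}(T)$, so the substantive work is distributed over the remaining four arrows.

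For (i) $\Longrightarrow$ (ii), I would appeal to the identity $\inter \, \sigma_{desc}(T) = \inter \, \sigma_{su}(T)$ from \cite[Corollary 2.18(5)]{TUD}, already used in Corollary \ref{cor-eq}: a point $\lambda \notin \sigma_{uw}(T)$ then lies, by (i), outside $\inter \, \sigma_{su}(T)$, and \eqref{SVEPsu} supplies SVEP of $T^{\prime}$ at $\lambda$. For (ii) $\Longrightarrow$ (iii), observe that $\lambda \notin \sigma_{uw}(T)$ makes $T - \lambda I$ upper semi-Fredholm, hence of Kato type; the classical localized SVEP characterization in \cite{aienabook,LN} then converts SVEP of $T^{\prime}$ at $\lambda$ into $q(T - \lambda I) < \infty$, i.e.\ $\lambda \notin \sigma_{desc}(T)$.

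For (iii) $\Longrightarrow$ (iv), the index bookkeeping already carried out in the proof of Theorem \ref{Theorem_1}(iii) applies verbatim: an upper semi-Weyl operator with finite descent satisfies $\alpha \leq \beta$ (from index $\leq 0$) and $\beta \leq \alpha$ (from \cite[Theorem 3.4(ii)]{aienabook}), giving $\alpha = \beta < \infty$, whence $p < \infty$ by \cite[Theorem 3.4(iv)]{aienabook}, so $T - \lambda I$ is Browder for every $\lambda \notin \sigma_{uw}(T)$. This yields a-Browder\textquoteright s theorem, and the equality $\sigma(T) = \sigma_{ap}(T)$ follows by a one-line check: for $\lambda \notin \sigma_{ap}(T)$ the operator $T - \lambda I$ is bounded below (in particular upper semi-Weyl, so (iii) forces $q(T - \lambda I) < \infty$), which combined with $\alpha(T - \lambda I) = 0$ gives $\beta(T - \lambda I) = 0$ and hence invertibility. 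For the converse (iv) $\Longrightarrow$ (ii), I would invoke the standard equivalence $\sigma(T) = \sigma_{ap}(T) \Longleftrightarrow T^{\prime}$ has SVEP everywhere (see \cite{aienabook,LN}): this delivers SVEP of $T^{\prime}$ at every point of $\mathbb{C}$, in particular at every $\lambda \notin \sigma_{uw}(T)$.

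The main obstacle is keeping the Kato-type/SVEP dictionary organized: the two background facts from local spectral theory that do the real work are (a) for $T - \lambda I$ of Kato type, $T^{\prime}$ has SVEP at $\lambda$ if and only if $q(T - \lambda I) < \infty$, and (b) globally, $T^{\prime}$ has SVEP if and only if $\sigma(T) = \sigma_{ap}(T)$. Once (a) and (b) are in hand, the remainder of the argument is a routine chase through Theorem \ref{Theorem_1}, \cite[Theorem 3.4]{aienabook}, and \cite[Corollary 2.18(5)]{TUD}.
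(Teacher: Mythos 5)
Your cycle (i) $\Rightarrow$ (ii) $\Rightarrow$ (iii) $\Rightarrow$ (i) and the arrow (iii) $\Rightarrow$ (iv) are sound. In fact your (i) $\Rightarrow$ (ii), which goes through $\inter\,\sigma_{desc}(T)=\inter\,\sigma_{su}(T)$ and the implication \eqref{SVEPsu}, is more direct than the paper's route (the paper instead deduces from Theorem \ref{Theorem_1}(vii) that $T-\lambda I$ is Browder, hence $\lambda\notin\acc\,\sigma(T)$); and your (iii) $\Rightarrow$ (iv) redoes by hand the index bookkeeping that the paper outsources to Theorem \ref{Theorem_1}(v) and (vii). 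Those differences are harmless.

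The genuine gap is in (iv) $\Rightarrow$ (ii). The ``standard equivalence'' you invoke, namely $\sigma(T)=\sigma_{ap}(T) \Longleftrightarrow T^{\prime}$ has SVEP everywhere, is false in the direction you need: only $T^{\prime}$ has SVEP $\Rightarrow \sigma_{ap}(T)=\sigma(T)$ holds in general. For a counterexample take $T=L\oplus R$ on $\ell_2(\NN)\oplus\ell_2(\NN)$, where $L$ and $R$ are the unilateral left and right shifts. Then $\sigma_{ap}(L)=\sigma(T)=\sigma_{ap}(T)=\{\lambda:|\lambda|\le 1\}$, yet $T^{\prime}\cong R\oplus L$ fails SVEP at every $\lambda$ with $|\lambda|<1$ (the vectors $(1,\lambda,\lambda^2,\dots)$ give a nonzero analytic eigenvector field for $L$). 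Since your argument for this arrow uses only the hypothesis $\sigma(T)=\sigma_{ap}(T)$ and ignores the a-Browder hypothesis entirely, it cannot be repaired by a local patch: the same example shows that $\sigma(T)=\sigma_{ap}(T)$ alone does not imply (i), (ii) or (iii) (there $\sigma_{uw}(T)\subset\{|\lambda|=1\}$ while $\inter\,\sigma_{desc}(T)\supset\{|\lambda|<1\}$). You must use both halves of (iv), as the paper does: a-Browder's theorem, i.e.\ $\sigma_{ub}(T)=\sigma_{uw}(T)$, yields $\acc\,\sigma_{ap}(T)\subset\sigma_{uw}(T)$ by \cite[Theorem 3.65(v)]{aienabook}, and then
$$\inter\,\sigma_{desc}(T)\subset\inter\,\sigma(T)\subset\acc\,\sigma(T)=\acc\,\sigma_{ap}(T)\subset\sigma_{uw}(T),$$
which is (i); your already-established arrow (i) $\Rightarrow$ (ii) then closes the diagram.
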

\begin{proof}
	{\rm (i)} $\Longrightarrow$ {\rm (ii)}. Let $\inter \, \sigma_{desc}(T) \subset \sigma_{uw}(T)$ and $\lambda \not \in \sigma_{uw}(T)$. From Theorem \ref{Theorem_1}, part (vii), we have that $T-\lambda I$ is Browder and thus $\lambda \not \in \acc \, \sigma(T)$ by \cite[Corollary 20.20]{Muller}. Consequently, $T^{\prime}$ has {\rm SVEP} at $\lambda$.
	
	\smallskip
	
	\noindent {\rm (ii)} $\Longrightarrow$ {\rm (iii)}. Let $\lambda \not \in \sigma_{uw}(T)$. Then $T-\lambda I$ is upper semi-Fredholm and \cite[Theorem 3.17]{aienabook} ensures that $q(T-\lambda I)<\infty$. It follows that $\lambda \not \in \sigma_{desc}(T)$.
	
	\smallskip
	
	\noindent {\rm (iii)} $\Longrightarrow$ {\rm (i)}. Obvious.
	
	\smallskip
	
	\noindent {\rm (i)} $\Longrightarrow$ {\rm (iv)}. From $\inter \, \sigma_{desc}(T) \subset \sigma_{uw}(T) \subset \sigma_{ap}(T)$ and from part {\rm (v)} of Theorem \ref{Theorem_1} we obtain $\sigma(T)=\sigma_{ap}(T)$.
	Further, $\sigma_b(T)=\sigma_{uw}(T)$ by Theorem \ref{Theorem_1}, part {\rm (vii)}. From this and from $\sigma_{uw}(T) \subset \sigma_{ub}(T) \subset \sigma_b(T)$ we deduce $\sigma_{uw}(T)=\sigma_{ub}(T)$, so a-Browder\textquoteright s theorem holds for $T$.
	
	\smallskip
	
	\noindent {\rm (iv)} $\Longrightarrow$ {\rm (i)}. Since $\sigma_{ub}(T)=\sigma_{uw}(T)$, $\acc \, \sigma_{ap}(T) \subset \sigma_{uw}(T)$ by \cite[Theorem 3.65, part (v)]{aienabook}. Now, we have $\inter \, \sigma_{desc}(T) \subset \acc \, \sigma(T)=\acc \, \sigma_{ap}(T) \subset \sigma_{uw}(T)$. 
\end{proof}
\noindent The implication
\begin{equation}\label{reversed}
T^{\prime} \, \text{has SVEP at every} \, \lambda \not \in \sigma_{uw}(T) \Rightarrow \text{a-Browder\textquoteright s theorem holds for} \, T
\end{equation}
was proved in \cite[Theorem 2.3]{aienaTstar}. Moreover, it was also observed in \cite{aienaTstar} that the implication \eqref{reversed} can not be reversed in general. From Theorem \ref{cor-aBrowder} we may obtain more information concerning this issue. Namely, from the equivalence {\rm (ii)} $\Longleftrightarrow$ {\rm (iv)} of Theorem \ref{cor-aBrowder} we conclude that the implication \eqref{reversed} can be reversed if $\sigma(T)=\sigma_{ap}(T)$. If $\sigma_{ap}(T)$ is a proper subset of $\sigma(T)$, then the converse of the assertion \eqref{reversed} does not hold. We now apply this observations to weighted right shifts and isometries. For relevant background material concerning weighted right shifts, see \cite[pp. 85-90]{LN}.

\begin{example}
	{\em A weighted right shift $T$ on $\ell_2(\mathbb{N})$ is defined by
		$$T(x_1, x_2, \cdots)=(0, w_1x_1, w_2x_2, \cdots) \; \; \text{for all} \; \; (x_1, x_2, \cdots) \in \ell_2(\mathbb{N}),$$
		where $(w_n)$ is a bounded sequence of strictly positive numbers ($0<w_n\leq 1$ for all $n \in \mathbb{N}$). It is easily seen that $T$ is a bounded linear operator on $\ell_2(\mathbb{N})$ and that $T$ has no eigenvalues. It follows that $T$ has SVEP, so every weighted right shift satisfies a-Browder\textquoteright s theorem by \cite[Theorem 2.3 (i)]{aienaTstar}.
		
		The approximate point spectrum and the spectrum of $T$ are described as follows:
		$$\sigma_{ap}(T)=\{\lambda \in \mathbb{C}:i(T)\leq|\lambda| \leq r(T)\}, \; \sigma(T)=\{\lambda \in \mathbb{C}:|\lambda| \leq r(T)\},$$
		where $r(T)=\max\{|\lambda|: \lambda \in \sigma(T)\}=\displaystyle \lim_{n \to \infty} \|T^n\|^{1/n}$ is the spectral radius of $T$ and $i(T)=\displaystyle \lim_{n \to \infty} k(T^n)^{1/n}$. 
		
		It is possible to find a weight sequence $(w_n)$ for which the corresponding weighted right shift $T$ satisfies $0<i(T)<r(T)$. P. Aiena et al. proved \cite[Example 2.5]{aienaTstar} that $0 \not \in \sigma_{uw}(T)$ and that $T^{\prime}$ does not have SVEP at $0$, so the converse of \eqref{reversed} does not hold for $T$. On the other hand, using Theorem \ref{cor-aBrowder} we obtain the same conclusion in a rather direct way if we observe that $\sigma_{ap}(T) \subsetneq \sigma(T)$.}
\end{example}

\begin{example}
	{\em Let $T \in L(X)$ be a non-invertible isometry. It is known that $\sigma(T)=\{\lambda \in \mathbb{C}:|\lambda| \leq 1\}$ and $\sigma_{ap}(T)=\{\lambda \in \mathbb{C}:|\lambda|=1\}$ \cite[p. 80]{LN}. Consequently, $\inter \, \sigma_{ap}(T)$ is empty set, so $T$ has SVEP, and from \cite[Corollary 2.4]{aienaTstar} we infer that a-Browder\textquoteright s theorem holds for $T$. Since $\sigma_{ap}(T)$ is a proper subset of $\sigma(T)$, the reverse of the assertion \eqref{reversed} does not hold for $T$.}
\end{example}

\begin{theorem}\label{converseprime}
	Let $T \in L(X)$. The following conditions are equivalent:
	
	\smallskip
	
	\noindent {\rm (i)} $\inter \, \sigma_{LD}(T) \subset \sigma_{lw}(T)$;
	
	\smallskip
	
	\noindent {\rm (ii)} $T$ has {\rm SVEP} at every $\lambda \not \in \sigma_{lw}(T)$;
	
	\smallskip
	
	\noindent {\rm (iii)} $\sigma_{LD}(T) \subset \sigma_{lw}(T)$;
	
	\smallskip
	
	\noindent {\rm (iv)} a-Browder\textquoteright s theorem holds for $T^{\prime}$ and $\sigma(T)=\sigma_{su}(T)$.
\end{theorem}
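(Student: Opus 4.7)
The plan is to mirror the proof of Theorem \ref{cor-aBrowder} almost verbatim, systematically dualising: I would replace SVEP of $T'$ by SVEP of $T$, replace $\sigma_{ap}$, $\sigma_{desc}$, $\sigma_{uw}$ by $\sigma_{su}$, $\sigma_{LD}$, $\sigma_{lw}$ respectively, and replace parts (v) and (vii) of Theorem \ref{Theorem_1} by parts (vi) and (viii). The backbone is a cyclic chain (i) $\Longrightarrow$ (ii) $\Longrightarrow$ (iii) $\Longrightarrow$ (i), supplemented by the equivalence (i) $\Longleftrightarrow$ (iv).

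For (i) $\Longrightarrow$ (ii), I would apply part (viii) of Theorem \ref{Theorem_1} exactly as part (vii) was invoked previously to obtain $\sigma_b(T)=\sigma_{lw}(T)$. Consequently, for $\lambda\notin\sigma_{lw}(T)$, $T-\lambda I$ is Browder, so $\lambda\notin\acc\,\sigma(T)$ by \cite[Corollary 20.20]{Muller}, and $T$ has SVEP at $\lambda$. For (ii) $\Longrightarrow$ (iii), I would invoke \cite[Theorem 3.17]{aienabook}: SVEP of $T$ at $\lambda$ together with lower semi-Fredholmness at $\lambda$ gives $p(T-\lambda I)<\infty$; the extra requirement that $R((T-\lambda I)^{p+1})$ be closed is automatic because powers of a lower semi-Fredholm operator remain lower semi-Fredholm. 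The implication (iii) $\Longrightarrow$ (i) is trivial.

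For (i) $\Longrightarrow$ (iv), the chain $\inter\,\sigma_{LD}(T)\subset\sigma_{lw}(T)\subset\sigma_{su}(T)$ together with part (vi) of Theorem \ref{Theorem_1} immediately forces $\sigma(T)=\sigma_{su}(T)$. The identity $\sigma_b(T)=\sigma_{lw}(T)$ from the first step, combined with the standard duality relations $\sigma_b(T)=\sigma_b(T')$ and $\sigma_{lw}(T)=\sigma_{uw}(T')$, yields $\sigma_{uw}(T')=\sigma_b(T')$; sandwiching $\sigma_{uw}(T')\subset\sigma_{ub}(T')\subset\sigma_b(T')$ then gives a-Browder's theorem for $T'$. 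Conversely, for (iv) $\Longrightarrow$ (i), the dual of the $\acc$-characterisation of a-Browder (the analogue of \cite[Theorem 3.65(v)]{aienabook}) supplies $\acc\,\sigma_{su}(T)\subset\sigma_{lw}(T)$; together with $\sigma(T)=\sigma_{su}(T)$ this gives $\acc\,\sigma(T)\subset\sigma_{lw}(T)$, and finally monotonicity of the interior plus $\sigma_{LD}(T)\subset\sigma(T)$ yields $\inter\,\sigma_{LD}(T)\subset\inter\,\sigma(T)\subset\acc\,\sigma(T)\subset\sigma_{lw}(T)$.

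The only real obstacle I anticipate is the bookkeeping of duality in the (i) $\Longleftrightarrow$ (iv) step: unlike Theorem \ref{cor-aBrowder}, where a-Browder is proved directly for $T$, here one has to transport the identity $\sigma_b(T)=\sigma_{lw}(T)$ across to the primed setting via $\sigma_b(T)=\sigma_b(T')$ and $\sigma_{lw}(T)=\sigma_{uw}(T')$, and similarly invoke the $T'$-version of the $\acc$-characterisation rather than the more familiar $T$-version. Once these standard duality facts are in hand, every remaining step is a direct transcription of the argument in Theorem \ref{cor-aBrowder}.
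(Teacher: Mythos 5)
Your proposal reproduces the paper's proof essentially step for step: the same cycle (i) $\Longrightarrow$ (ii) $\Longrightarrow$ (iii) $\Longrightarrow$ (i) via $\sigma_b(T)=\sigma_{lw}(T)$ from Theorem \ref{Theorem_1}(viii) and finite ascent from SVEP at a lower semi-Fredholm point, and the same treatment of (i) $\Longleftrightarrow$ (iv) via parts (vi) and (viii) and the dual accumulation-point characterisation $\acc\,\sigma_{su}(T)\subset\sigma_{lw}(T)$. The only discrepancy is cosmetic: for (ii) $\Longrightarrow$ (iii) the paper cites Aiena's Theorem 3.16 (SVEP of $T$ gives finite ascent) rather than Theorem 3.17 (SVEP of $T'$ gives finite descent), but the fact you actually use is the correct one.
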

\begin{proof}
	{\rm (i)} $\Longrightarrow$  {\rm (ii)}. Since $\inter \, \sigma_{LD}(T) \subset \sigma_{lw}(T)$, $\sigma_b(T)=\sigma_{lw}(T)$ by Theorem \ref{Theorem_1}, part (viii). If $\lambda \not \in \sigma_{lw}(T)$, then $\lambda \not \in \sigma_b(T)$, and hence $\lambda \not \in \acc \, \sigma(T)$ \cite[Corollary 20.20]{Muller}. Consequently, $T$ has SVEP at $\lambda$.
	
	\smallskip
	
	\noindent {\rm (ii)} $\Longrightarrow$  {\rm (iii)}. Let $\lambda \not \in \sigma_{lw}(T)$. Then $T-\lambda I$ is lower semi-Fredholm, so $T-\lambda I$ is of Kato type. By \cite[Theorem 3.16]{aienabook} the SVEP of $T$ at $\lambda$ implies that $p=p(T-\lambda)<\infty$. In addition, $R((T-\lambda I)^{p+1})$ is closed, so $\lambda \not \in \sigma_{LD}(T)$.
	
	\smallskip
	
	\noindent {\rm (iii)} $\Longrightarrow$  {\rm (i)}. It is clear.
	
	\smallskip
	
	\noindent {\rm (i)} $\Longrightarrow$  {\rm (iv)}. Since  $\inter \, \sigma_{LD}(T) \subset \sigma_{lw}(T) \subset \sigma_{su}(T)$ we conclude by Theorem \ref{Theorem_1}, parts (vi) and (viii), that $\sigma(T)=\sigma_{su}(T)$ and $\sigma_b(T)=\sigma_{lw}(T)$. Now, $\sigma_b(T^{\prime})=\sigma_{uw}(T^{\prime}) \subset \sigma_{ub}(T^{\prime}) \subset \sigma_{b}(T^{\prime})$. It follows that $\sigma_{uw}(T^{\prime})=\sigma_{ub}(T^{\prime})$, so a-Browder\textquoteright s theorem holds for $T^{\prime}$.
	
	\smallskip
	
	\noindent {\rm (iv)} $\Longrightarrow$  {\rm (i)}. Since $\sigma_{uw}(T^{\prime})=\sigma_{ub}(T^{\prime})$, $\sigma_{lw}(T)=\sigma_{lb}(T)$ and therefore \cite[Theorem 3.65, part (v)]{aienabook} ensures that $\acc \, \sigma_{su}(T) \subset \sigma_{lw}(T)$. It follows that $\inter \, \sigma_{LD}(T) \subset \acc \, \sigma(T)=\acc \, \sigma_{su}(T) \subset \sigma_{lw}(T)$.
\end{proof}

The implication 
\begin{equation}\label{reverseprime}
T \, \text{has SVEP at every} \, \lambda \not \in \sigma_{lw}(T) \Rightarrow \text{a-Browder\textquoteright s theorem holds for} \, T^{\prime}
\end{equation}
can be found in \cite[Theorem 2.3]{aienaTstar}. What is more, the converse of the assertion \eqref{reverseprime} does not hold in general \cite[Example 2.5]{aienaTstar}. According to Theorem \ref{converseprime}, the implication \eqref{reverseprime} can be reversed if $\sigma(T)=\sigma_{su}(T)$.

\medskip 

\noindent \author{Milo\v s D. Cvetkovi\'c}

\noindent{Faculty of Occupational Safety\\University of Ni\v s\\
\v{C}arnojevi\'ca 10a, 18000 Ni\v s, Serbia}

\noindent {\it E-mail}: {\tt miloscvetkovic83@gmail.com}

\bigskip

\noindent \author{Sne\v zana \v C. \v Zivkovi\'c-Zlatanovi\'c}

\noindent{Faculty of Sciences and Mathematics\\University of Ni\v s\\
Vi\v{s}egradska 33, P.O. Box 224 \\ 18000 Ni\v s, Serbia}

\noindent {\it E-mail}: {\tt mladvlad@mts.rs}

\end{document}